\tikzset{join/.code=\tikzset{after node path={%
\ifx\tikzchainprevious\pgfutil@empty\else(\tikzchainprevious)%
edge[every join]#1(\tikzchaincurrent)\fi}}}
\tikzset{>=stealth',every on chain/.append style={join},
         every join/.style={->}}
\tikzstyle{arrow} = [->,>=stealth]
\newcommand{\PP}{\mathbb{P}}
\newcommand{\Fq}{\mathbb{F}_q}
\newcommand{\slq}{SL_2\big(\Fq\big)}
\newcommand{\FF}{\mathbb{F}}
\newcommand{\fplane}{\PP^2\big(\FF\big)}
\newcommand{\m}{\mathfrak{m}}
\newcommand{\ord}{\mathrm{ord}}
\newcommand{\abcd}{\begin{pmatrix} \alpha & \beta \\ \gamma & \delta \end{pmatrix}}
\newcommand{\overbar}[1]{\mkern 1.5mu\overline{\mkern-1.5mu#1\mkern-1.5mu}\mkern 1.5mu}
\def\bign#1{\mathclose{\hbox{$\left#1\vbox to8.5\p@{}\right.\n@space$}}\mathopen{}}
\theoremstyle{plain}
\newtheorem{thm}{Theorem}[section]
\newtheorem{lemma}[thm]{Lemma}
\newtheorem{prop}[thm]{Proposition}
\newtheorem{corollary}[thm]{Corollary}
\theoremstyle{definition}
\theoremstyle{remark}
\newtheorem{rmk}[thm]{Remark}
\theoremstyle{definition}
\title{The canonical representation of the \\Drinfeld curve}
\author{{\sc Lucas Laurent} and {\sc Bernhard Köck}}
\date{}
\begin{document}

\maketitle

\begin{abstract}
\noindent
If $C$ is a smooth projective curve over an algebraically closed field $\mathbb{F}$ and $G$ is a group of automorphisms of $C$, the \textit{canonical representation of $C$} is given by the induced $\FF$-linear action of $G$ on the vector space $H^0\big(C,\Omega_C\big)$ of holomorphic differentials on $C$. Computing it is still an open problem in general when the cover $C \rightarrow C/G$ is wildly ramified. In this paper, we fix a prime power $q$, we consider the Drinfeld curve, i.e., the curve $C$ given by the equation ${XY^q-X^qY-Z^{q+1}=0}$ over $\mathbb{F}=\overbar{\mathbb{F}_q}$ together with its standard action by ${G=SL_2\big(\mathbb{F}_q\big)}$, and decompose $H^0\big(C,\Omega_C\big)$ as a direct sum of indecomposable representations of~$G$, thus solving the aforementioned problem in this case.
\end{abstract}

\section{Introduction}
The action of a finite group $G$ on a smooth projective curve~$C$ over an algebraically closed field $\FF$ induces an $\FF$-linear action of $G$ on the vector space of holomorphic differentials on $C$, denoted by $H^0\big(C,\Omega_C\big)$. This space hence becomes an $\FF[G]$-module whose dimension is equal to the genus $g(C)$ of the curve $C$, a fundamental invariant of smooth projective curves. Thus, it is natural to try and compute it, i.e., to find the decomposition of $H^0\big(C,\Omega_C\big)$ as a direct sum of indecomposable $\FF[G]$-modules.

This problem has been introduced by Hecke in 1928 \cite{hecke}, and it has been solved in 1934 by Chevalley and Weil \cite{chevalley} in the case the characteristic of~$\FF$ does not divide the order of $G$. If in contrast the characteristic~$p$ of~$\FF$ does divide the order of $G$, then $H^0\big(C,\Omega_C\big)$ is a modular representation, and this problem becomes notoriously hard, one reason being that already for fairly small groups listing the indecomposable representations is a formidable problem.

However, some progress has been made. In 1986, Nakajima \cite{nakajima} and Kani \cite{kani} independently computed this representation for an arbitrary finite group of automorphisms $G$ of $C$, in the case where the cover $C \rightarrow C/G$ is at most tamely ramified. Moreover, most of the theorems leading to Kani’s and Nakajima’s result have been generalised in \cite{kock} to the so-called weakly ramified case, the simplest but, in a sense, most frequent form of wild ramification.

On the other hand, restricting the type of the group $G$ rather than the ramification of $C \rightarrow C/G$ has lead to some progress as well. Indeed, Valentini and Madan \cite{valentini} solved the problem for $G$ a cyclic $p$-group. Karanikolopoulos and Kontogeorgis \cite{karanikolopoulos} extended this result to an arbitrary cyclic group~$G$. Moreover, Bleher, Chinburg, and Kontogeorgis \cite{bleher} gave an algorithm to solve the problem in the case when $G$ admits a non-trivial cyclic Sylow $p$-subgroup, which is a necessary and sufficient condition for $G$ to admit finitely many isomorphism classes of indecomposable representations \cite{higman}. Further results in this context have been obtained by Rzedowski-Calder\'{o}n, Villa-Salvador and Madan in \cite{madanvilla1} and \cite{madanvilla2}, by Marques and Ward in \cite{ward}, by Garnek in \cite{garnek} and by Bleher and Camacho in~\cite{blehercamacho}.

The object of this paper is to find the decomposition of $H^0\big(C,\Omega_C\big)$ as a direct sum of indecomposable $\FF[G]$-modules when $G=\slq$, $q=p^r$ for some $r \geq 1$, $\FF=\overbar{\Fq}$, and $C$ is the Drinfeld curve together with the action of $G$ as presented in \cref{section context}. The precise result is stated in \cref{decomposition thm}.

Note that because the Sylow $p$-subgroups of $G$ are isomorphic to the additive group of $\FF_q$, we can apply the algorithm from \cite{bleher} if, and only if, $r=1$. In that paper, the authors give a formula for the decomposition of $\mathrm{Res}^G_H H^0\big(C,\Omega_C\big)$ where $H = P \rtimes C'$ is a $p$-hypo-elementary subgroup of $G$, i.e., $P$ is a cyclic $p$-subgroup and $C'$ is a cyclic $p'$-subgroup. Conlon's induction theorem \cite[Corollary 80.51]{curtisreiner} implies that the $\FF[G]$-module structure of $H^0\big(C,\Omega_C\big)$ is completely determined by this. This procedure has led to a first proof of \cref{decomposition thm} in the case $r=1$, see \cite[Chapter~2]{laurent}.

In order to prove \cref{decomposition thm} for an arbitrary $r \geq 1$, we however use the following very different approach. In \cref{section basis}, we first compute an explicit basis of the vector space $H^0\big(C,\Omega_C\big)$ over $\FF$. Then from the action of $G$ on this basis, we find a decomposition of the $\FF[G]$-module $H^0\big(C,\Omega_C\big)$ in \cref{section decomposition}, which has been inspired by the case $r=1$. From   \cite[Corollary~10.1.5]{bonnafe} we obtain that each summand in this decomposition is indecomposable.

{\bf Acknowledgements}. The second author thanks Adriano Marmora for early, extensive and insightful discussions about the general theme of this paper where the specific problem underlying this paper has arisen. The authors also thank Peter Kropholler for his continued interest in our research. Finally, the authors thank the referees for carefully reading the paper and for their constructive comments, particularly for pointing out that the just mentioned indecomposability can be easily derived from the literature.

\section{Preliminaries about the Drinfeld curve and statement of main result}\label{section context}
In this section, we introduce some notations and facts about the Drinfeld curve and state our main result.

Throughout, we fix an algebraically closed field $\mathbb{F}$ of characteristic $p >0$ and a power $q=p^r$ of $p$. The {\it Drinfeld curve $C$} is then defined as the zero locus in $\fplane$ of the homogeneous polynomial
\[XY^q-X^qY-Z^{q+1},\]
which is the compactification of the affine curve $C_Z$ defined by $xy^q-x^qy-1$. This curve has $q+1$ points at infinity defined by $XY^q-X^qY=0$, which are the points $[x:y:0]$ where $x,y \in \Fq$. As seen in \cite[Propositions 2.1.1 and 2.4.1]{bonnafe}, the projective curve $C$ is smooth and irreducible. Usually, we will simply write
\[M:= H^0\big(C,\Omega_C\big)\]
for the vector space over $\FF$ of holomorphic differentials on $C$. By definition, $M$ has dimension $g(C)$, the genus of $C$, which by the genus-degree formula is $g(C)=\frac{q(q-1)}{2}$ \cite[Subsection 2.5.1]{bonnafe}. We denote the function field of $C$ by $\FF(C)$.

We view $\FF_q$ as the unique subfield of~$\FF$ containing $q$ elements.  For $g=\abcd \in G :=\slq$ and $[x:y:z] \in C$, we then define
\[g \cdot [x:y:z] := [\alpha x+\beta y:\gamma x+\delta y:z].\]
It is straightforward to verify that the map
\[
G \times C  \longrightarrow C, \quad
(g,[x:y:z]) \longmapsto g \cdot [x:y:z]
\]
defines a left action of $G$ on the algebraic curve $C$. Pulling back differentials finally defines a right action of $G$ on $M$ which is called the {\it canonical representation of $C$} and which is the object of study of this paper. Our main result, see \cref{decomposition thm} below, computes this modular representation.

We view the elements of $\FF^2$ as row vectors and and consider the standard right action of $G=\slq$ on $\FF^2$. For $k \ge 0$, let $V^k$ denote the $k^\mathrm{th}$ symmetric power $\mathrm{Sym}^k(\FF^2)$ of $\FF^2$ together with its induced action of $G$.

\begin{thm}\label{decomposition thm}
We have the following decomposition of $H^0(C, \Omega_C)$ into indecomposable $\FF[G]$-modules:
\[H^0\big(C,\Omega_C\big) \cong \bigoplus\limits_{k=0}^{q-2} V^k\]
In particular, the $\FF[G]$-module $H^0(C, \Omega_C)$ is semi-simple if and only if $q=p$.
\end{thm}

\begin{proof}
Establishing this isomorphism of $\FF[G]$-modules, see  \cref{decomposition}, is the object of the remaining two sections of this paper.

If $0 \le k \le q-1$, then, by \cref{remark geometric interpretation} below and \cite[Corollary 10.1.5]{bonnafe}, the intersection of all non-zero $\FF[G]$-submodules of $V^k$ is non-zero; in particular, $V^k$ is indecomposable.

Furthermore, if $q=p$, the $\FF[G]$-modules $V^0, \ldots, V^{p-1}$ are simple by \cite[p.\ 15--16]{alperin}. (In fact, they form a full set of non-isomorphic simple $\FF[G]$-modules). On the other hand, if $q >p$, then $V^p$ is not simple; this follows for example from \cite[Theorem 10.1.8]{bonnafe}.
\end{proof}

\section{A basis of the space of global holomorphic differentials}\label{section basis}
The goal of this section is to exhibit an explicit basis of the vector space $M$.

First, we identify a local parameter at each point of $C$. These will be used to verify that certain differentials on $C$ are holomorphic.

\begin{lemma}\label{local parameters}
Let $P=[x_0:y_0:z_0]$ be a point of $C$.
\begin{itemize}
\item If $z_0 \neq 0$, then a local parameter at $P$ is $\frac{X}{Z}-\frac{x_0}{z_0}$.
\item If $z_0=0$ and $x_0 \neq 0$, then a local parameter at $P$ is $\frac{Z}{X}$.
\item If $z_0=0$ and $y_0 \neq 0$, then a local parameter at $P$ is $\frac{Z}{Y}$.
\end{itemize}
\end{lemma}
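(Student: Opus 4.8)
The goal is to prove that the three listed functions are local parameters (uniformizers) at the respective points of $C$.

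The plan is to use the standard criterion that a rational function $t$ on a smooth curve $C$ is a local parameter at a point $P$ if and only if $\ord_P(t) = 1$, equivalently $t \in \m_{C,P} \setminus \m_{C,P}^2$, equivalently $t$ generates the maximal ideal of the local ring $\Op_{C,P}$. Since $C$ is smooth (hence each local ring is a discrete valuation ring) and the listed functions clearly vanish at $P$, it suffices in each case to show that the function does \emph{not} vanish to order $\geq 2$, or equivalently that its differential is nonzero at $P$ in the sense that it cuts out the point transversally. I would carry this out by passing to the appropriate affine chart and applying the Jacobian criterion.

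First I would treat the case $z_0 \neq 0$. Here I dehomogenise using $Z$, writing $x = X/Z$ and $y = Y/Z$, so that $C$ is locally the affine curve $C_Z$ defined by $f(x,y) = xy^q - x^q y - 1 = 0$. The candidate parameter is $x - x_0$. I would compute the partial derivatives $\partial f/\partial x = y^q - q x^{q-1} y = y^q$ and $\partial f/\partial y = q x y^{q-1} - x^q = -x^q$, using that the characteristic is $p$ and $q$ is a power of $p$ so that $q \equiv 0$. Then, since the point lies on $C_Z$ and the affine action is free (so no point has both coordinates zero in a degenerate way), one checks $(x_0, y_0) \neq (0,0)$; in fact from $f=0$ neither $x_0$ nor $y_0$ can vanish. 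Because $\partial f/\partial x = y^q \neq 0$ at $P$, the function $y - y_0$ is \emph{not} a uniformiser while $x - x_0$ is: more precisely, the implicit function / Jacobian criterion shows that $x - x_0$ generates $\m_{C,P}$ exactly when $\partial f / \partial y \neq 0$ at $P$, which holds since $\partial f/\partial y = -x_0^q \neq 0$. I would phrase this cleanly via the cotangent space: $\m_P/\m_P^2$ is one-dimensional and $x - x_0$ maps to a nonzero element precisely because the relation $df = 0$ forces $\partial f/\partial x\,(x-x_0) + \partial f/\partial y\,(y-y_0) \equiv 0$ modulo $\m_P^2$, and the nonvanishing coefficient on $y - y_0$ lets us solve for $y - y_0$ in terms of $x - x_0$.

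Next I would handle the two points-at-infinity cases together by symmetry. For $z_0 = 0$, $x_0 \neq 0$ I dehomogenise with respect to $X$, setting $u = Y/X$, $t = Z/X$, so $C$ becomes $g(u,t) = u^q - u - t^{q+1} = 0$ near $P = [1:u_0:0]$ with $t_0 = 0$. The candidate parameter is $t = Z/X$. Computing $\partial g/\partial u = q u^{q-1} - 1 = -1 \neq 0$ and $\partial g/\partial t = -(q+1) t^q = -t^q$, I note that at $P$ we have $\partial g/\partial u = -1 \neq 0$, so by the same cotangent-space argument as above, $t$ generates $\m_P$ (the nonvanishing of $\partial g/\partial u$ lets us eliminate $u - u_0$, leaving $t$ as the parameter). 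The case $z_0 = 0$, $y_0 \neq 0$ is entirely analogous after dehomogenising with respect to $Y$.

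I do not anticipate a serious obstacle here; the main point requiring care is the systematic use of the characteristic-$p$ simplifications $q \equiv 0$ and $q + 1 \equiv 1 \pmod p$, which make the partial derivatives especially clean and are what guarantee in each chart that exactly the ``other'' variable's partial derivative is a unit at $P$, so that the named coordinate function is indeed the uniformiser. The only genuine verification is that the relevant partial derivative is nonzero at $P$ in each case, which reduces to the elementary observations that $x_0, y_0 \neq 0$ on the affine curve and that the constant $-1$ appearing in $\partial g/\partial u$ is automatically a unit.
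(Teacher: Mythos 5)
Your proof is correct and establishes all three claims, but the decisive step is carried out by a different mechanism than in the paper. Both arguments share the same skeleton: pass to the three affine charts, observe that $\m_P$ is generated by the images of the two coordinate functions, and use the DVR structure plus Nakayama to reduce to deciding which generator survives modulo $\m_P^2$. Where you invoke the Jacobian/cotangent-space criterion (Taylor-expanding the defining equation and using nonvanishing of the appropriate partial derivative, e.g. $\partial f/\partial y = -x^q \neq 0$ on $C_Z$ and $\partial g/\partial u = -1$ at infinity), the paper instead produces explicit polynomial identities from the curve equation: in the chart $C_Z$ it shows
$(x-\tilde{x_0})\tilde{y_0}^q-\tilde{x_0}^q(y-\tilde{y_0}) = y(x-\tilde{x_0})^q-x(y-\tilde{y_0})^q \in \m^2$,
so that the two coordinates are unit multiples of one another modulo $\m^2$ and hence both generate $\m$; and at infinity it shows $s-s_0=(s-s_0)^q-t^{q+1} \in \m^q \subseteq \m^2$, which rules out $s-s_0$ and forces $t$ to be the parameter. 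Note that your partials $y^q$ and $-x^q$ are exactly the coefficients appearing in the paper's identity, so the two computations encode the same linear relation in the cotangent space, obtained by different means: yours is the more standard and systematic route, the paper's is more elementary and self-contained.

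One slip worth fixing: in the chart $C_Z$ you assert that since $\partial f/\partial x = y^q \neq 0$ at $P$, ``the function $y-y_0$ is not a uniformiser while $x-x_0$ is.'' That is backwards: nonvanishing of $\partial f/\partial x$ at $P$ is precisely the condition for $y-y_0$ to \emph{be} a uniformiser, and here both partials are nonzero, so both $x-x_0$ and $y-y_0$ are local parameters (as the paper's proof makes explicit). The error is harmless for the lemma, because your subsequent ``more precisely'' sentence --- that $x-x_0$ generates $\m_{C,P}$ exactly when $\partial f/\partial y(P) \neq 0$, which holds since $\partial f/\partial y = -x_0^q \neq 0$ --- is the correct criterion, correctly applied, and it alone proves the stated claim.
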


\begin{proof}
Let us begin with a point $[x_0:y_0:z_0] \in C_Z$. Let ${x=\frac{X}{Z}}$ and $y=\frac{Y}{Z}$ denote the two standard coordinates on the affine chart $C_Z$. Then $P$ becomes $(\tilde{x_0},\tilde{y_0}):=\left(\frac{x_0}{z_0},\frac{y_0}{z_0}\right)$. By Nakayama's Lemma, as the local ring $\mathcal{O}_{C_Z,(\tilde{x_0},\tilde{y_0})}$ is a discrete valuation ring \cite[Theorems I.5.1 and I.6.2A]{hartshorne}, one of the two generators $x-\tilde{x_0}$ and $y-\tilde{y_0}$ of the maximal ideal $\m$ of $\mathcal{O}_{C_Z,(\tilde{x_0},\tilde{y_0})}$ will be a local parameter at $(\tilde{x_0},\tilde{y_0})$. Using the equations ${\tilde{x_0}\tilde{y_0}^q-\tilde{x_0}^q\tilde{y_0}=1=xy^q-x^qy}$, we obtain that
\begin{align*}
(x-\tilde{x_0})\tilde{y_0}^q-\tilde{x_0}^q(y-\tilde{y_0})& = -(\tilde{x_0}\tilde{y_0}^q-\tilde{x_0}^q\tilde{y_0})+x\tilde{y_0}^q-\tilde{x_0}^qy \\
& = -(xy^q-x^qy)+x\tilde{y_0}^q-\tilde{x_0}^qy\\
&=y(x-\tilde{x_0})^q-x(y-\tilde{y_0})^q \in \m^2.
\end{align*}
Therefore, because both $\tilde{x_0}$ and $\tilde{y_0}$ are non-zero, $x-\tilde{x_0}$ and $y-\tilde{y_0}$ differ mod~$\mathfrak{m}^2$ by a unit, hence each of them generates $\m$. Without loss of generality, we choose $x-\tilde{x_0}=\frac{X}{Z}-\frac{x_0}{z_0}$.\\
Now, suppose that $z_0=0$. Then either $x_0 \neq 0$ or $y_0 \neq 0$. If $x_0 \neq 0$, we use the coordinates $s=\frac{Y}{X}$ and $t=\frac{Z}{X}$ and write $P$ as $(s_0,t_0):=\left(\frac{y_0}{x_0},0\right)$. Since $s^q-s-t^{q+1}=0$ and $s_0 \in \Fq$, we obtain
\[s-s_0=s^q-t^{q+1}-s_0=s^q-t^{q+1}-s_0^q=(s-s_0)^q-(t-t_0)^{q+1},\]
hence $s-s_0 \in \m^q \subseteq \m^2$, which cannot be true for a local parameter. Thus, arguing as above, $t-t_0=\frac{Z}{X}$ is a local parameter at $P$.\\
Finally, if $y_0 \neq 0$ we use the coordinates $v=\frac{X}{Y}$ and $w=\frac{Z}{Y}$ and $P$ becomes $(v_0,w_0):=\left(\frac{x_0}{y_0},0\right)$. Then, again because $v-v^q-w^{q+1}=0$ and $v_0 \in \Fq$, we similarly obtain that $v-v_0 \in \m^q$ and that $w -w_0=\frac{Z}{Y}$ is a local parameter.
\end{proof}

The next proposition exhibits a basis of the vector space $M$. As in the proof above, let $x=\frac{X}{Z}$ and $y=\frac{Y}{Z}$. Recall \cite[Theorem II.8.6A]{hartshorne} that the vector space $\Omega_{\FF(C)/\FF}$ of meromorphic differentials on $C$ is of dimension 1 over $\FF(C)$, for example with basis $dx$.

\begin{prop}\label{basis}
For $0 \leq i,j \leq q-2$ such that $i+j \leq q-2$, define
\[\omega_{i,j} :=\frac{x^iy^j}{x^q}dx \in \Omega_{\FF(C)/\FF}. \]
Then each $\omega_{i,j}$ is holomorphic on the whole of $C$, i.e., $\omega_{i,j} \in M$, and the set $\{\omega_{i,j} \ | \ 0 \leq i,j \leq q-2, \ i+j \leq q-2 \}$ is a basis of $M$ over $\FF$.
\end{prop}

\begin{proof}
We will first show that each $\omega_{i,j}$ is in $M$. It then suffices to show that they are linearly independent over $\FF$ because the dimension of $M$ is $g(C)= \frac{(q-1)q}{2}$ which is easily seen to be equal to the number of $\omega_{i,j}$.\\
We know, if $t$ is a local parameter at any $P \in C$, then the meromorphic differential $f dt$ is regular at $P$ if, and only if, the rational function $f$ is regular at $P$ \cite[Section IV.2]{hartshorne}. Let now $0 \leq i,j \leq q-2$ such that $i+j \leq q-2$, and let $P=[x_0:y_0:z_0]\in C$. Suppose first that $z_0 \neq 0$. Then, because $x-\frac{x_0}{z_0}$ is a local parameter at $P$ by \cref{local parameters} and because $dx=d(x-\frac{x_0}{z_0})$, we simply need to verify that $\frac{x^iy^j}{x^q}$ is a regular function at $P$. This is the case because the value $\frac{x_0}{z_0}$ of $x$ at $P$ is non-zero because $\frac{x_0}{z_0}\left(\frac{y_0}{z_0}\right)^q-\left(\frac{x_0}{z_0}\right)^q\frac{y_0}{z_0}=1$. Therefore, $\omega_{i,j}$ is holomorphic on $C_Z$. Now, suppose that $z_0=0$ and $x_0 \neq 0$, and let $s=\frac{Y}{X}$, $t=\frac{Z}{X}$ as in the proof of \cref{local parameters}. Then $x=\frac{1}{t}$ and $y=\frac{s}{t}$, so the following holds in $\Omega_{\FF(C)/\FF}$.
\[\omega_{i,j}=\frac{\left(\frac{1}{t}\right)^i\left(\frac{s}{t}\right)^j}{\left(\frac{1}{t}\right)^q}d\left(\frac{1}{t}\right)=t^{q-i-j}s^j\left(\frac{-1}{t^2}\right)dt=-t^{q-2-(i+j)}s^j dt\]
Because $t$ is a local parameter at $P$ by \cref{local parameters}, we just need to check that $-t^{q-2-(i+j)}s^j$ is regular at $P$, which is the case because $x_0 \neq 0$ and $i+j \leq q-2$. Finally, suppose that $z_0=0$ and $y_0 \neq 0$, and let $v=\frac{X}{Y},$ $w=\frac{Z}{Y}$ as in \cref{local parameters}. By observing that $x=\frac{v}{w}$ and $y=\frac{1}{w}$, combined with the fact that $\frac{1}{x^q} dx=\frac{1}{y^q} dy \text{ in } \Omega_{\FF(C)/\FF}$ (obtained by differentiating $xy^q-x^qy-1=0$), the following holds.
\[\omega_{i,j}=\frac{\left(\frac{v}{w}\right)^i\left(\frac{1}{w}\right)^j}{\left(\frac{v}{w}\right)^q} v^q d\left(\frac{1}{w}\right)=w^{q-i-j}v^i \left(\frac{-1}{w^2}\right) dw=-w^{q-2-(i+j)}v^{i} dw\]
As above, because $w$ is a local parameter at $P$ by \cref{local parameters} and because $-w^{q-2-(i+j)}v^{i}$ is regular at $P$, we have that $\omega_{i,j}$ is regular at $P$. We proved that each $\omega_{i,j}$ is an element of $M$.\\
Let us now show that these differentials are linearly independent over $\FF$. Let $\lambda_{i,j} \in \FF$ such that
$\sum_{i,j} \lambda_{i,j} \omega_{i,j}=0$ in $M$. Then
\[\left(\sum\limits_{i,j} \lambda_{i,j} x^iy^j\right)\frac{1}{x^q} dx=\sum\limits_{i,j} \lambda_{i,j} x^iy^j \frac{1}{x^q} dx=0 \quad \textrm{in} \quad \Omega_{\FF(C)/\FF}\]
and hence $f(x,y):=\sum\limits_{i,j} \lambda_{i,j} x^iy^j=0$ in $\FF(C)$. Therefore, the affine variety~$C_Z$ is a subset of the zero locus of $f(x,y)$ in $\mathbb{A}^2(\FF)$, which implies by Hilbert's Nullstellensatz that
\[f(x,y) \in \text{Rad}\big(f(x,y)\big) \subseteq\text{Rad}\big(xy^q-x^qy-1\big)=\big(xy^q-x^qy-1\big),\]
where the last equality holds because $xy^q-x^qy-1$ is an irreducible polynomial in $\FF[x,y]$ by \cite[Proposition 2.1.1]{bonnafe}. Hence
$xy^q-x^qy-1$ divides $f(x,y) \text{ in } \FF[x,y]$, which can only happen if $f(x,y)=0$ in $\FF[x,y]$ because otherwise $f(x,y)$ is of finite total degree at most $q-2$. Therefore, the $\omega_{i,j}$ are linearly independent.
\end{proof}

\begin{rmk}
The differentials $\omega_{i,j}$ defined in \cref{basis} have the following vanishing order at $P=[x_0:y_0:z_0] \in C$.

\begin{empheq}[left=\empheqlbrace]{align*}
0 \ \ \ \ \ \ \ \ \ & \text{ if } z_0 \neq 0 \cr
q-2-(i+j) & \text{ if } z_0=0, \ x_0 \neq 0 \text{ and } y_0 \neq 0 \cr
q-2-i+jq & \text{ if } P=[1:0:0] \cr
q-2-j+iq & \text{ if } P=[0:1:0]
\end{empheq}

This follows from the proof above and the fact that
\[\ord_P(s)=\min\{\ord_P(s^q),\ord_P(s)\}=\ord_P(s^q-s)=\ord_P(t^{q+1})=q+1\]
if $P=[1:0:0]$ and similarly that $\ord_P(v) = q+1$ if $P=[0:1:0]$.
\end{rmk}

\section{The decomposition of \texorpdfstring{$M$}{M} as an \texorpdfstring{$\FF[G]$}{F[G]}-module}\label{section decomposition}
By the Krull-Schmidt theorem, $M$ has a decomposition as a direct sum of indecomposable right $\FF[G]$-modules which are unique up to isomorphism and permutation. The goal of this section is to find these indecomposable $\FF[G]$-modules and their multiplicity.

First, we compute the action of an element of $G$ on the basis of $M$ given in \cref{basis}.

\begin{lemma}\label{action on basis}
Let $0 \leq i,j \leq q-2$ such that $i+j \leq q-2$, and let ${g =\abcd \in G}$. Then
\[g^*(\omega_{i,j})=\frac{(\alpha x+\beta y)^i(\gamma x+\delta y)^j}{x^q}dx.\]
\end{lemma}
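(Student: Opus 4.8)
The plan is to compute the pullback $\bigl(g^{-1}\bigr)^\ast \omega_{i,j}$ directly from the definition of the $G$-action given in Remark \ref{left or right action}, using the explicit formula $\omega_{i,j} = \frac{x^i y^j}{x^q}\,dx$ from Proposition \ref{basis}. The key observation is that pulling back a differential along an automorphism amounts to substituting the coordinate functions by their images under that automorphism. So first I would write down explicitly the action of $g^{-1}$ on the affine coordinates $x = \frac{X}{Z}$ and $y = \frac{Y}{Z}$. Since $g = \abcd$ has $\det g = \alpha\delta - \beta\gamma = 1$, its inverse is $g^{-1} = \begin{pmatrix} \delta & -\beta \\ -\gamma & \alpha \end{pmatrix}$, and the curve automorphism $\varphi_{g^{-1}}$ sends $[x:y:z] \mapsto [\delta x - \beta y : -\gamma x + \alpha y : z]$. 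Dehomogenising with respect to $Z$, this means $\varphi_{g^{-1}}^\ast x = \delta x - \beta y$ and $\varphi_{g^{-1}}^\ast y = -\gamma x + \alpha y$, since the $z$-coordinate is left invariant.

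Next I would substitute these into the formula for $\omega_{i,j}$. The numerator $x^i y^j$ pulls back to $(\delta x - \beta y)^i(-\gamma x + \alpha y)^j$, which already matches the claimed numerator. The factor $\frac{1}{x^q}$ pulls back to $\frac{1}{(\delta x - \beta y)^q}$, and $dx$ pulls back to $d(\delta x - \beta y) = \delta\,dx - \beta\,dy$. So the main obstacle is to show that
$$\frac{d(\delta x - \beta y)}{(\delta x - \beta y)^q} = \frac{dx}{x^q} \quad \text{in } \Omega_{\FF(C)/\FF}.$$
This is precisely where the defining equation of the curve enters. I expect the cleanest route is to recall the relation $\frac{dx}{x^q} = \frac{dy}{y^q}$ established in the proof of Proposition \ref{basis} (obtained by differentiating $xy^q - x^q y - 1 = 0$). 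Writing $u = \delta x - \beta y$, I would like to show $\frac{du}{u^q} = \frac{dx}{x^q}$ by checking that the one-form $\frac{dx}{x^q}$ is invariant under the linear substitution induced by any element of $\slq$.

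The slickest way to handle this invariance is to observe that $\frac{dx}{x^q}$ can be rewritten intrinsically. Differentiating the affine equation gives $(y^q - x^q)\,dx + (qxy^{q-1} - x^q)\,dy = 0$ in characteristic $p$; since $q = p^r$ we have $q \equiv 0$, so this simplifies to $(y^q - x^q)\,dx - x^q\,dy = 0$, i.e. $y^q\,dx = x^q\,dx + x^q\,dy = x^q\,d(x+y)$. More usefully, the relation $\frac{dx}{x^q} = \frac{dy}{y^q}$ shows that $\frac{dx}{x^q}$ equals $\frac{a\,dx + b\,dy}{(ax+by)^q}$ for any constants $a,b \in \FF$, by a short computation: the $q$-th power map is additive and $\FF$-semilinear, so $(ax+by)^q = a^q x^q + b^q y^q$ and $a\,dx + b\,dy = a\,dx + b\,dy$; combining with $x^q\,dy = y^q\,dx$ lets one verify the claimed equality after clearing denominators. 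Applying this with $a = \delta$, $b = -\beta$ gives exactly $\frac{du}{u^q} = \frac{dx}{x^q}$, completing the computation. The one genuinely delicate point to get right is the semilinearity of the $q$-th power map, so I would state and verify the intermediate identity $\frac{a\,dx + b\,dy}{(ax+by)^q} = \frac{dx}{x^q}$ carefully as a separate step before specialising.
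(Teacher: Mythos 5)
Your overall route is the same as the paper's: compute $g^{-1}$, pull back the coordinates via $x \mapsto \delta x - \beta y$, $y \mapsto -\gamma x + \alpha y$, and reduce the lemma to the invariance statement $\frac{d(\delta x - \beta y)}{(\delta x - \beta y)^q} = \frac{dx}{x^q}$. However, the intermediate identity on which you hang everything is false as stated. You claim $\frac{a\,dx + b\,dy}{(ax+by)^q} = \frac{dx}{x^q}$ ``for any constants $a,b \in \FF$''. Carrying out the verification you outline: using $x^q\,dy = y^q\,dx$, the numerator is $a\,dx + b\,dy = \frac{ax^q + by^q}{x^q}\,dx$, while the denominator is $(ax+by)^q = a^q x^q + b^q y^q$, so the identity is equivalent to $(a - a^q)\,x^q = (b^q - b)\,y^q$ in $\FF(C)$. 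Since $y/x$ is not constant on $C$, this forces $a^q = a$ and $b^q = b$, i.e.\ $a, b \in \Fq$. So the invariance of $\frac{dx}{x^q}$ holds exactly for coefficients in $\Fq$ and fails for all other constants in $\FF$; this is not a technicality but the precise point where the hypothesis $g \in \slq$, rather than $g \in SL_2(\FF)$, enters the lemma. The paper makes this explicit in its one-line conclusion: $(\delta x - \beta y)^q = \delta x^q - \beta y^q$ holds \emph{because} $\delta, \beta \in \Fq$. Your proof is repairable by a one-word change — state and prove the identity for $a, b \in \Fq$ — after which your specialisation to $a = \delta$, $b = -\beta$ is legitimate, as both lie in $\Fq$.

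A second, harmless slip: differentiating $xy^q - x^qy - 1 = 0$ gives $d$-coefficient of $dx$ equal to $y^q - qx^{q-1}y = y^q$ in characteristic $p$, not $y^q - x^q$, so the correct relation is $y^q\,dx = x^q\,dy$ and your derived identity $y^q\,dx = x^q\,d(x+y)$ is wrong. This does not propagate, because your final verification uses the correct relation $x^q\,dy = y^q\,dx$ quoted from the proof of Proposition \ref{basis}, but it should be deleted from any written-up version.
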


\begin{proof}
We have $g^*(x) = \alpha x+\beta y$ and $g^*(y) = \gamma x+\delta y$. Using the fact that $\frac{1}{x^q}dx=\frac{1}{y^q}dy$ as in the proof of \cref{basis}, this implies that
\[g^*(dx) = d(\alpha x+\beta y)=\left(\alpha+\beta \left(\frac{y}{x}\right)^q\right)dx=\frac{\alpha x^q+\beta y^q}{x^q}dx.\]
Putting these facts together, we obtain that
\begin{align*}
g^*( \omega_{i,j}) & = \frac{(\alpha x+\beta y)^i(\gamma x+\delta y)^j}{(\alpha x+\beta y)^q}\frac{\alpha x^q+\beta y^q}{x^q}dx \\
& = \frac{(\alpha x+\beta y)^i(\gamma x+\delta y)^j}{x^q}dx,
\end{align*}
as was to be shown.
\end{proof}

The next result states that the $\FF[G]$-module $M$ can be expressed as a direct sum of the $\FF[G]$-modules $V^k= \mathrm{Sym}^k(\FF_q^2)$, $k=0, \ldots, q-2$; these are obviously not isomorphic to each other and indecomposable as we've seen in the proof of \cref{decomposition thm}.

\begin{corollary}\label{decomposition}
The following isomorphism of $\FF[G]$-modules holds:
\[M \cong \bigoplus_{k=0}^{q-2} V^k\]
\end{corollary}

\begin{proof}
Let us define the degree of each $\omega_{i,j}$ as $i+j$. Then by \cref{action on basis}, each $\omega_{i,j}$ is mapped to a sum of basis elements of degree $i+j$ under the action of any element of $G$. Therefore, if $W^k$ is the subspace of $M$ with basis ${\{\omega_{i,j} \ | \ i+j=k\}}$ then $M \cong \oplus_{k=0}^{q-2} W^k$ as $\FF[G]$-modules, because each $W^k$ is stable under the action of $G$ and the $\omega_{i,j}$ form a basis of $M$ by \cref{basis}. Furthermore, by \cref{action on basis}, mapping $\omega_{i,j}$ to $(1,0)^i \cdot (0,1)^j$ in $\mathrm{Sym}^k(\FF_q^2) = V^k$ evidently defines an isomorphism of right $\FF[G]$-modules. Combining these two facts finishes the proof.
\end{proof}

\begin{rmk}\label{remark geometric interpretation}
Let $\mathbf{G}=SL_2\big(\FF\big)$ and consider the standard left action of $\mathbf{G}$ on~$\FF^2$. Then, the left $\FF[\mathbf{G}]$-modules $\Delta(k) := \mathrm{Sym}^k(\FF^2)$, $k \ge 0$, play a crucial role in \cite[Section 10]{bonnafe}. Furthermore, let the right $\FF[\mathbf{G}]$-module $\tilde{\Delta}(k)$ be obtained from the left $\FF[\mathbf{G}]$-module $\Delta(k)$ via transposition on $\mathbf{G}$. We then have ${\text{Res}^\mathbf{G}_G\tilde{\Delta}(k)\cong V^k}$. In particular, \cref{decomposition} gives a geometric realisation of the modules $\mathrm{Res}^{\mathbf{G}}_G\tilde{\Delta}(k)$, thus extending the geometric viewpoint omnipresent in \cite{bonnafe}.
\end{rmk}

{\footnotesize

\bibliography{bibliography}
\bibliographystyle{alpha}
}

\end{document}